\newtheorem*{thmA}{Theorem A}
\newtheorem*{thmB}{Theorem B}
\newtheorem{theorem}{Theorem}[section]
\newtheorem{corollary}[theorem]{Corollary}
\newtheorem{definition}[theorem]{Definition}
\newtheorem{proposition}[theorem]{Proposition}
\newtheorem{remark}[theorem]{Remark}
\newcommand{\ie}{i.\hspace{.5pt}e.\ }
\newcommand{\f}{\phi}
\newcommand{\g}{\tilde{g}}
\newcommand{\n}{\nabla}
\newcommand{\M}{(\mathcal{M},\A\f,\A\xi,\A\eta,\A{}g)}
\newcommand{\R}{\mathbb R}
\newcommand{\X}{\mathfrak X}
\newcommand{\F}{\mathcal{F}}
\newcommand{\LL}{\mathcal{L}}
\newcommand{\ta}{\theta}
\newcommand{\om}{\omega}
\newcommand{\lm}{\lambda}
\newcommand{\al}{\alpha}
\DeclareMathOperator{\Div}{div} 
\DeclareMathOperator{\tr}{tr} 
\DeclareMathOperator{\Span}{span} 
\newcommand{\corref}[1]{Corollary~\ref{#1}}
\newcommand{\propref}[1]{Proposition~\ref{#1}}
\newcommand{\remref}[1]{Remark~\ref{#1}}
\newcommand{\A}{\allowbreak{}}
\begin{document}

\title[Para-Ricci-like Solitons on ApapR Manifolds]
{Para-Ricci-like Solitons on Almost Paracontact Almost Paracomplex Riemannian Manifolds}

\author[H. Manev]{Hristo Manev}
\author[M. Manev]{Mancho Manev}

\address[H. Manev]{Medical University of Plovdiv, Faculty of Public Health,
Department of Medical Informatics, Biostatistics and E-Learning, 15A Vasil Aprilov Blvd.,
Plovdiv 4002, Bulgaria}
\email{hristo.manev@mu-plovdiv.bg}
\address[M. Manev]{University of Plovdiv Paisii Hilendarski,
Faculty of Mathematics and Informatics, Department of Algebra and
Geometry, 24 Tzar Asen St., Plovdiv 4000, Bulgaria
\&
Medical University of Plovdiv, Faculty of Public Health,
Department of Medical Informatics, Biostatistics and E-Learning, 15A Vasil Aprilov Blvd.,
Plovdiv 4002, Bulgaria}
\email{mmanev@uni-plovdiv.bg}

\begin{abstract}
It is introduced and studied para-Ricci-like solitons with potential Reeb vector field
on almost paracontact almost paracomplex Riemannian manifolds.
The special cases of para-Einstein-like, para-Sasaki-like and having a torse-forming Reeb vector field have been considered. It is proved a necessary and sufficient condition the manifold to admit a para-Ricci-like soliton which is the structure to be para-Einstein-like.
{Explicit examples are provided in support of the proven statements.}
\end{abstract}

\subjclass[2010]{Primary
53C25, 
53D15,  	
53C50; 
Secondary
53C44,  	
53D35, 
70G45} 

\keywords{para-Ricci-like soliton, $\eta$-Ricci soliton, para-Einstein-like manifold, $\eta$-Ein\-stein manifold, almost paracontact almost paracomplex Riemannian manifold, torse-forming vector field}


\maketitle



\section{Introduction}

The concept of Ricci solitons is introduced in 1982 by R.\,S. Hamilton \cite{Ham82} as a special self-similar solution of the Ricci flow equation, and it plays an important role in understanding its singularities.
In other words, Ricci solitons represent a generalization of Einstein metrics on a Riemannian
manifold, being generalized fixed points of the Ricci flow, considered as a dynamical system.
A detailed study on Riemannian Ricci solitons may be found in \cite{Cao}.

{The study of Ricci solitons in contact Riemannian geometry began in 2008 by R. Sharma \cite{Shar}.}
After that the investigation of the geometric characteristics of the Ricci solitons continues on different {types} of almost contact metric manifolds (see \cite{IngBag}, \cite{BagIng13}, \cite{NagPre}, \cite{AyaYil}, \cite{GalCra}).
The notion of $\eta$-Ricci soliton, introduced by J.\,T. Cho and M. Kimura \cite{ChoKim}, is a generalization of the concept of Ricci soliton. Investigations on $\eta$-Ricci solitons have been made in \cite{Bla15}, \cite{BlaCra}, \cite{PraHad} but in the context of paracontact geometry.

Recently, {various} geometers {have been extensively studying} the pseudo-Riemannian properties of Ricci solitons as its generalizations. This interest is based on the growing interest of theoretical physicists {in} Ricci solitons and their relation with string theory. For further references on pseudo-Riemannian Ricci solitons, see \cite{CalPer}, \cite{Bro-etal}, \cite{BagIng12}, \cite{BlaPer}, \cite{MM-Sol1}.

The object of our focus is the geometry of the almost paracontact almost paracomplex Riemannian manifolds. In these manifolds, the induced almost product structure on the paracontact distribution is traceless and the restriction on the paracontact distribution of the almost paracontact structure is an almost paracomplex structure. In \cite{ManSta}, these manifolds are introduced and classified. Their investigation continues in \cite{ManTav57,ManTav2}.

In the present paper, due to the presence of two associated metrics {each other}, we introduce and study our generalization of the Ricci soliton compatible with the
{manifold structure}, called para-Ricci-like soliton. We characterize it in terms of some important {types} of manifolds under consideration -- para-Einstein-like, para-Sasaki-like and having a torse-forming Reeb vector field.
We comment two examples in {support} of the {proven} statements.

\section{The main results}

\begin{thmA}\label{thm:RlSl}
Let $\M$ be a $(2n+1)$-dimensional para-Sasaki-like almost paracontact almost paracomplex Riemannian manifold. Let $a$, $b$, $c$, $\lm$, $\mu$, $\nu$ be constants {satisfying the conditions:}
\begin{equation}\label{SlElRl-const}
a+\lm=0,\qquad b+\mu+1=0,\qquad c+\nu-1=0.
\end{equation}
Then, $\M$ admits
a para-Ricci-like soliton with potential $\xi$ and constants $(\lm,\A\mu,\A\nu)$, where $\lm+\mu+\nu=2n$,
if and only if
it is para-Einstein-like with constants $(a,b,c)$, where $a+b+c=-2n$.

In particular, we get:
\begin{enumerate}
	\item[(i)] $\M$ admits an $\eta$-Ricci soliton with potential $\xi$ and constants $(\lm,0,2n-\lm)$ if and only if $\M$ is para-Einstein-like with constants $(-\lm,-1,\lm-2n+1)$.

	\item[(ii)] $\M$ admits a shrinking Ricci soliton with potential $\xi$ and constants $(2n,0,0)$ if and only if $\M$ is para-Einstein-like with constants $(-2n,-1,1)$.

	\item[(iii)] $\M$ is $\eta$-Einstein with constants $(a,0,-2n-a)$  if and only if
$\M$ admits a para-Ricci-like soliton with potential $\xi$ and constants $(-a,-1,a+2n+1)$.

	\item[(iv)] $\M$ is Einstein with constants $(2n,0,0)$ if and only if
$\M$ admits a para-Ricci-like soliton with potential $\xi$ and constants $(2n,-1,1)$.
\end{enumerate}
\end{thmA}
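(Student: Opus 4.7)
The plan is to reduce the iff statement to a single tensorial identity for $\LL_\xi g$ on a para-Sasaki-like manifold. Writing the defining equation of a para-Ricci-like soliton with potential $\xi$ as
\[ \tfrac{1}{2}(\LL_\xi g)(X,Y) + \rho(X,Y) + \lm\, g(X,Y) + \mu\, \g(X,Y) + \nu\, \eta(X)\eta(Y) = 0, \]
where $\rho$ denotes the Ricci tensor, and the para-Einstein-like condition as
\[ \rho(X,Y) = a\, g(X,Y) + b\, \g(X,Y) + c\, \eta(X)\eta(Y), \]
I would subtract to obtain the equivalent requirement
\[ (a+\lm)\, g + (b+\mu)\, \g + (c+\nu)\, \eta\otimes\eta + \tfrac{1}{2}\LL_\xi g = 0. \]
Under the linear constraints \eqref{SlElRl-const}, the coefficient combinations collapse to $0$, $-1$, $+1$ respectively, so the whole theorem is equivalent to establishing the identity
\[ (\LL_\xi g)(X,Y) = 2\, \g(X,Y) - 2\, \eta(X)\eta(Y) \]
on every para-Sasaki-like almost paracontact almost paracomplex Riemannian manifold.

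Establishing this identity is the core computational step. Using the covariant description of the para-Sasaki-like structure recalled from \cite{ManTav57} and \cite{ManTav2}, I would extract the explicit formula for $\n_X \xi$ in terms of $\f$, $\eta$, and $\xi$, and substitute into
\[ (\LL_\xi g)(X,Y) = g(\n_X \xi, Y) + g(X, \n_Y \xi). \]
Combined with the structural relation expressing $\g$ through $g$, $\f$, and $\eta$, the symmetrized right-hand side would rearrange directly to $2\g(X,Y) - 2\eta(X)\eta(Y)$. The two trace-type side conditions $\lm + \mu + \nu = 2n$ and $a + b + c = -2n$ would then be verified by evaluating either characterization at the pair $(\xi,\xi)$: since $g(\xi,\xi) = \g(\xi,\xi) = \eta(\xi) = 1$ and $(\LL_\xi g)(\xi,\xi) = 0$ by standard properties of the Reeb vector field, both sums reduce to $\rho(\xi,\xi)$, which takes the value $-2n$ on a para-Sasaki-like manifold.

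Parts (i)--(iv) would then follow by specialization of the constants. For (i), setting $\mu = 0$ reduces the soliton to an $\eta$-Ricci soliton; the relations \eqref{SlElRl-const} together with $\lm + \mu + \nu = 2n$ force $b = -1$ and $c = \lm - 2n + 1$. Case (ii) further restricts $\nu = 0$, which fixes $\lm = 2n$ and identifies the soliton as a classical (shrinking) Ricci soliton. Cases (iii) and (iv) are the mirror specializations, starting from the Einstein-type side and reading off the soliton constants via \eqref{SlElRl-const}. The main obstacle is the first step, the derivation of the Lie-derivative identity: it requires a careful unpacking of the para-Sasaki-like structure equations and attention to sign conventions for $\f$, $\g$, and $\n\xi$; once that identity is in hand, everything else is direct algebra in the tensor coefficients.
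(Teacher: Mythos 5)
Your proposal is correct and follows essentially the same route as the paper: both reduce the equivalence to the identity $\mathcal{L}_\xi g=2\g-2\,\eta\otimes\eta$, which on a para-Sasaki-like manifold follows from $\n_x\xi=\f x$ and $\g(x,y)=g(x,\f y)+\eta(x)\eta(y)$, and both obtain the trace conditions $\lm+\mu+\nu=2n$, $a+b+c=-2n$ from $\rho(\xi,\xi)=-2n$. The specializations (i)--(iv) are handled the same way in both arguments.
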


\begin{thmB}\label{thm:Rltf}
Let {$\M$ be an almost paracontact almost paracomplex Riemannian manifold and $\xi$ be} torse-forming with function $f$. Then, $\M$ admits a para-Ricci-like soliton with potential $\xi$ and constants $(\lm,\mu,\nu)$ if and only if $\M$ is para-Einstein-like with constants $(a,b,c)$ {satisfying the conditions:}
\begin{equation}\label{tfElRl-const}
a+\lm+f=0,\qquad b+\mu=0,\qquad c+\nu-f=0,
\end{equation}
where $f$ is a constant.

In particular, we have:
\begin{enumerate}
	\item[(i)] $\M$ admits an $\eta$-Ricci soliton with potential $\xi$ and constants $(\lm,0,\nu)$ if and only if $\M$ is $\eta$-Einstein with constants $(a,0,c)$, where $a+c=-\lm-\nu$.

	\item[(ii)] $\M$ admits a Ricci soliton with potential $\xi$ and constants $(\lm,0,0)$ if and only if
$\M$ is $\eta$-Einstein with constants $(-\lm-f,0,f)$.

	\item[(iii)] $\M$ is Einstein with constants $(a,0,0)$ if and only if $\M$ admits an $\eta$-Ricci soliton with potential $\xi$ and constants $(-a-f,0,f)$.
\end{enumerate}
\end{thmB}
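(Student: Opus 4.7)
The plan is to derive both directions of the equivalence from a single algebraic identity. The torse-forming hypothesis on $\xi$ reduces $\mathcal{L}_\xi g$ to a closed-form combination of $g$ and $\eta\otimes\eta$; inserting this together with the para-Einstein-like ansatz for the Ricci tensor into the soliton equation collapses everything to one equation whose coefficients can be read off term by term.

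First I would record that, because $\xi$ has $g(\xi,\xi)=1$, the general torse-forming relation $\nabla_X\xi = fX + \omega(X)\xi$ combined with $g(\nabla_X\xi,\xi)=0$ forces $\omega=-f\eta$, so $\nabla_X\xi = f(X-\eta(X)\xi)$ and
\begin{equation*}
(\mathcal{L}_\xi g)(X,Y) = g(\nabla_X\xi,Y) + g(\nabla_Y\xi,X) = 2f\bigl(g(X,Y) - \eta(X)\eta(Y)\bigr).
\end{equation*}
Substituting this expression, together with the ansatz $\mathrm{Ric}=a\,g + b\,\tilde g + c\,\eta\otimes\eta$, into the defining equation
\begin{equation*}
\tfrac12(\mathcal{L}_\xi g)(X,Y) + \mathrm{Ric}(X,Y) + \lm\,g(X,Y) + \mu\,\tilde g(X,Y) + \nu\,\eta(X)\eta(Y) = 0
\end{equation*}
of a para-Ricci-like soliton and grouping terms yields
\begin{equation*}
(a + \lm + f)\,g + (b + \mu)\,\tilde g + (c + \nu - f)\,\eta\otimes\eta = 0.
\end{equation*}
Since $g$, $\tilde g$ and $\eta\otimes\eta$ are linearly independent as symmetric bilinear forms on the structure, the three coefficients must vanish separately; this is precisely \eqref{tfElRl-const}, and it delivers both directions of the equivalence at once.

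The one conceptual step is the asserted constancy of the torse-forming function $f$. Since by definition both the soliton constants $(\lm,\mu,\nu)$ and the Einstein-like constants $(a,b,c)$ are scalars, the first equation of \eqref{tfElRl-const} rewrites as $f = -a - \lm$, which is manifestly constant; this is the point at which the blanket constancy assumptions on $\lm,\mu,\nu$ are actually used. The three corollaries are then pure specializations of \eqref{tfElRl-const}: \textup{(i)} corresponds to $\mu = b = 0$; \textup{(ii)} further to $\nu = 0$, which forces $a = -\lm - f$ and $c = f$; and \textup{(iii)} to the Einstein condition $b = c = 0$, which forces $\mu = 0$ and $\nu = f$.
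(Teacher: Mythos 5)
Your proposal is correct and follows essentially the same route as the paper: compute $\mathcal{L}_\xi g = 2f\,(g-\eta\otimes\eta)$ from the torse-forming condition, substitute into the soliton equation together with the para-Einstein-like ansatz, and match coefficients of $g$, $\g$ and $\eta\otimes\eta$ to obtain \eqref{tfElRl-const} in both directions, with the constancy of $f$ tied to the constancy of the soliton and Einstein-like coefficients. Your explicit appeal to the linear independence of $g$, $\g$ and $\eta\otimes\eta$ (which holds since $\f|_{\ker\eta}$ has both eigenvalues $\pm1$) is a welcome precision that the paper leaves implicit.
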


\section{Almost paracontact almost paracomplex Riemannian manifolds}

Let $\M$ be an \emph{almost paracontact almost paracomplex Riemannian manifold} (briefly, {an} apapR manifold){. Namely,}  $\mathcal{M}$ is a differentiable odd-dimensional manifold, {equipped with a Rie\-mannian metric $g$ and an almost paracontact structure $(\f,\xi,\eta)$, \ie} $\f$ is a (1,1)-tensor field, $\xi$ is the characteristic vector field and $\eta$ is its dual 1-form, which satisfy the following:
\begin{equation}\label{strM}
\begin{array}{c}
\f\xi = 0,\qquad \f^2 = I - \eta \otimes \xi,\qquad
\eta\circ\f=0,\qquad \eta(\xi)=1,\\ \
\tr \f=0,\qquad g(\f x, \f y) = g(x,y) - \eta(x)\eta(y),
\end{array}
\end{equation}
where $I$ is the identity transformation on $T\mathcal{M}$ (\cite{Sato76}, \cite{ManTav57}). Consequently, we get the following equations:
\begin{equation}\label{strM2}
\begin{array}{ll}
g(\f x, y) = g(x,\f y),\qquad &g(x, \xi) = \eta(x),
\\
g(\xi, \xi) = 1,\qquad &\eta(\n_x \xi) = 0,
\end{array}
\end{equation}
where {$\n$ denotes} the Levi-Civita connection of $g$.
Here and further $x$, $y$, $z$, $w$ stand for arbitrary vector fields from $\X(\mathcal{M})$ or vectors in $T\mathcal{M}$ at a fixed point of $\mathcal{M}$.

The associated metric $\g$ of $g$ on $\M$ is determined by {the equality} $\g(x,y)=g(x,\f y)+\eta(x)\eta(y)$. Therefore $\g$ is {an} indefinite metric of signature $(n + 1, n)$, which is compatible with $\M$ {in the same way} as $g$.

The apapR manifolds are classified in \cite{ManSta}. This classification contains eleven basic classes $\F_1$, $\F_2$, $\dots$, $\F_{11}$ and it is made with respect to the (0,3)-tensor $F$ determined by
\begin{equation}\label{F=nfi}
F(x,y,z)=g\bigl( \left( \nabla_x \f \right)y,z\bigr).
\end{equation}
The following basic properties of $F$ are valid:
\begin{equation}\label{F-prop}
\begin{array}{l}
F(x,y,z)=F(x,z,y)\\
\phantom{F(x,y,z)}
=-F(x,\f y,\f z)+\eta(y)F(x,\xi,z)
+\eta(z)F(x,y,\xi),\\
(\n_x\eta)y=g(\n_x\xi,y)=-F(x,\f y, \xi).
\end{array}
\end{equation}

{The intersection of the basic classes is denoted by $\F_0$,} which is the special class determined by the condition $F=0$.

The 1-forms associated with $F$ (also known as Lee forms) are defined by:
\begin{equation}\label{t}
\theta=g^{ij}F(e_i,e_j,\cdot),\quad
\theta^*=g^{ij}F(e_i,\f e_j,\cdot), \quad \omega=F(\xi,\xi,\cdot),
\end{equation}
where $\left(g^{ij}\right)$ is the inverse matrix of $\left(g_{ij}\right)$ of $g$ with respect to
a basis $\left\{\xi;e_i\right\}$ $(i=1,2,\dots,2n)$ of $T_p\mathcal{M}${, $p\in \mathcal{M}$}.

\subsection{{Para-Sasaki-like manifolds}}

In \cite{IvMaMa2}, {the class} of \emph{para-Sasaki-like spa\-ces} {is defined} in the set of almost paracontact almost paracomplex Riemannian manifolds {obtained} from a {specific} cone construction. The considered para-Sasaki-like spaces are determined by
\begin{equation}\label{defSl}
\begin{array}{l}
\left(\nabla_x\f\right)y=-g(x,y)\xi-\eta(y)x+2\eta(x)\eta(y)\xi,\\
\phantom{\left(\nabla_x\f\right)y}=-g(\f x,\f y)\xi-\eta(y)\f^2 x.
\end{array}
\end{equation}

\begin{remark}\label{rem:Sl}
The apapR manifolds of para-Sasaki-like type {form} a subclass of the basic class $\F_4$ {from} \cite{ManSta} and {their Lee forms are} $\ta=-2n\,\eta$ {and} $\ta^*=\om=0$.
\end{remark}

In \cite{IvMaMa2}, the truthfulness of the following identities is proved
\begin{equation}\label{curSl}
\begin{array}{ll}
\n_x \xi=\f x, \qquad &\left(\n_x \eta \right)(y)=g(x,\f y),\\
R(x,y)\xi=-\eta(y)x+\eta(x)y, \qquad &R(\xi,y)\xi=\f^2y, \\
\rho(x,\xi)=-2n\, \eta(x),\qquad 				&\rho(\xi,\xi)=-2n,
\end{array}
\end{equation}
where {$R$ and $\rho$ denote} the curvature tensor and the Ricci tensor, respectively.

\subsection{{Para-Einstein-like manifolds}}

\begin{definition}
An apapR manifold $\M$ is said to be
\emph{para-Ein\-stein-like} with constants $(a,b,c)$ if its Ricci tensor $\rho$ satisfies
\begin{equation}\label{defEl}
\begin{array}{l}
\rho=a\,g +b\,\g +c\,\eta\otimes\eta.
\end{array}
\end{equation}
\end{definition}

In particular, {the manifold is known to be} called an \emph{$\eta$-Einstein manifold} and an \emph{Einstein manifold} when $b=0$ and $b=c=0$, respectively.

Using \eqref{defEl}, we get the corresponding scalar curvature
\begin{equation}\label{tauEl}
\begin{array}{l}
\tau=(2n+1)a +b +c.
\end{array}
\end{equation}

\begin{remark}
{The Reeb vector field $\xi$ of a para-Einstein-like manifold $\M$
is an eigenvector of the Ricci operator $Q=\rho^{\sharp}$, \ie $g(Qx,y)=\rho(x,y)$,
corresponding to the eigenvalue $a+b+c$.}
\end{remark}

\begin{proposition}
The Ricci tensor $\rho$ of a para-Einstein-like {manifold} $\M$ has the following properties:
\begin{equation}\label{roEl1}
\begin{array}{l}
\begin{array}{l}
\rho(\f x,\f y)=-\rho(x,y)-(a+b+c)\eta(x)\eta(y),
\end{array}\\
\begin{array}{ll}
\rho(\f x,y)=\rho(x,\f y), \qquad  & \rho(\f x,\xi)=0,
\\
\rho(x,\xi)=(a+b+c)\eta(x), \qquad  & \rho(\xi,\xi)=a+b+c,
\end{array}
\end{array}
\end{equation}
\begin{equation}\label{roEl2}
\begin{array}{l}
\left(\n_x\rho\right)(y,z)=b\,g\bigl(\left(\n_x\f\right)y,z\bigr) \\
\phantom{\left(\n_x\rho\right)(y,z)=}
+(b+c)\left\{g(\n_x \xi,y)\eta(z) +g(\n_x \xi,z)\eta(y)\right\}.
\end{array}
\end{equation}
\end{proposition}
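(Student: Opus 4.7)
The plan is to derive each identity in \eqref{roEl1} by direct substitution into the defining equation \eqref{defEl} of a para-Einstein-like manifold, exploiting the algebraic identities \eqref{strM}--\eqref{strM2} together with the definition $\g(x,y)=g(x,\f y)+\eta(x)\eta(y)$ of the associated metric. The differential identity \eqref{roEl2} will then be obtained by covariantly differentiating \eqref{defEl}.

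For the identities in \eqref{roEl1}, I would proceed in the following order. First, setting $y=\xi$ in \eqref{defEl} and using $g(x,\xi)=\eta(x)$, $\g(x,\xi)=g(x,\f\xi)+\eta(x)\eta(\xi)=\eta(x)$, and $\eta(\xi)=1$, I immediately obtain $\rho(x,\xi)=(a+b+c)\eta(x)$, which specializes to $\rho(\xi,\xi)=a+b+c$. Replacing $x$ by $\f x$ and using $\eta\circ\f=0$ then gives $\rho(\f x,\xi)=0$. For the symmetry $\rho(\f x,y)=\rho(x,\f y)$, I would invoke $g(\f x,y)=g(x,\f y)$ directly, together with the corresponding identity $\g(\f x,y)=\g(x,\f y)$, which follows by writing out both sides and applying $g(\f x,\f y)=g(x,y)-\eta(x)\eta(y)$ and $\eta\circ\f=0$. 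Finally, to obtain the $\rho(\f x,\f y)$ formula, I would substitute $\f x,\f y$ into \eqref{defEl}, kill the $c$-term using $\eta\circ\f=0$, reduce the $a$-term via $g(\f x,\f y)=g(x,y)-\eta(x)\eta(y)$, and compute $\g(\f x,\f y)=g(\f x,\f^2 y)=g(x,\f y)$ in the $b$-term via $\f^2=I-\eta\otimes\xi$; the result follows by comparing with the expansion $\rho(x,y)=a\,g(x,y)+b\,g(x,\f y)+(b+c)\,\eta(x)\eta(y)$.

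For the derivative identity \eqref{roEl2}, I would apply $\n_x$ to \eqref{defEl}. Since $g$ is Levi-Civita parallel, the $a$-term contributes nothing. A short computation using $\n g=0$ gives
\[
(\n_x\g)(y,z)=g\bigl(y,(\n_x\f)z\bigr)+(\n_x\eta)(y)\,\eta(z)+\eta(y)\,(\n_x\eta)(z),
\]
and $(\n_x(\eta\otimes\eta))(y,z)=(\n_x\eta)(y)\,\eta(z)+\eta(y)\,(\n_x\eta)(z)$. The relation $(\n_x\eta)(y)=g(\n_x\xi,y)$ follows from $\eta(y)=g(y,\xi)$ together with $\n g=0$, in agreement with \eqref{strM2}, while the self-adjointness $g\bigl(y,(\n_x\f)z\bigr)=g\bigl((\n_x\f)y,z\bigr)$ is obtained by differentiating $g(\f y,z)=g(y,\f z)$ and cancelling the $\n\f$-free terms. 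Assembling the $b$- and $c$-contributions then yields the claimed expression.

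The argument is entirely tensorial and presents no conceptual obstacle; the main bookkeeping risk is tracking the several $\eta$-correction terms so that the combinations $(a+b+c)$ in \eqref{roEl1} and $(b+c)$ in \eqref{roEl2} emerge correctly, especially in the $\rho(\f x,\f y)$ identity where both the $g$- and $\g$-pieces shed $\eta\otimes\eta$-corrections that must be recombined with the $c\,\eta\otimes\eta$ term.
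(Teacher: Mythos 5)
Your approach coincides with the paper's: the paper proves \eqref{roEl1} and \eqref{roEl2} precisely by substituting \eqref{strM} and \eqref{strM2} (together with the definition of $\g$) into \eqref{defEl}, and your treatment of $\rho(x,\xi)$, $\rho(\xi,\xi)$, $\rho(\f x,\xi)$, the symmetry $\rho(\f x,y)=\rho(x,\f y)$, and the whole of \eqref{roEl2} (including $(\n_x\g)(y,z)=g\bigl(y,(\n_x\f)z\bigr)+(\n_x\eta)(y)\eta(z)+\eta(y)(\n_x\eta)(z)$ and the self-adjointness of $\n_x\f$) is correct and complete.

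The one point that needs attention is the first identity of \eqref{roEl1}. Carrying out exactly the steps you describe gives
\[
\rho(\f x,\f y)=a\,g(x,y)+b\,g(x,\f y)-a\,\eta(x)\eta(y)
=\rho(x,y)-(a+b+c)\,\eta(x)\eta(y),
\]
that is, with $+\rho(x,y)$ on the right-hand side, not the printed $-\rho(x,y)$. The printed sign is in fact inconsistent: setting $x=y=\xi$ makes the left-hand side $\rho(\f\xi,\f\xi)=0$ while the right-hand side of the printed formula becomes $-2(a+b+c)$, which is nonzero in general (e.g.\ $a+b+c=-2n$ in the para-Sasaki-like case). So your computation is right and the statement carries a sign typo; but your closing sentence, ``the result follows by comparing with the expansion,'' silently identifies your (correct) output with the (incorrect) printed formula. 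You should say explicitly which identity the comparison actually yields, and note the discrepancy with the stated one.
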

\begin{proof}
We confirm the {validity} of the proposition applying \eqref{strM} and \eqref{strM2} in \eqref{defEl}.
\end{proof}

\begin{proposition}
Let $\M$ be a $(2n+1)$-dimensional apapR para-Sasaki-like para-Einstein-like manifold with constants $(a,b,c)$.
Then we have
\begin{equation}\label{tauElSl2}
a+b+c=-2n,\qquad \tau=2n(a-1),
\end{equation}
\begin{equation}\label{roElSl}
\begin{array}{l}
\left(\n_x\rho\right)(y,z)=(b+c)\left\{g(x,\f y)\eta(z) +g(x,\f z)\eta(y)\right\}\\
\phantom{\left(\n_x\rho\right)(y,z)=}{}
-b\left\{g(\f x,\f y)\eta(z) +g(\f x,\f z)\eta(y)\right\},
\end{array}
\end{equation}
\begin{equation}\label{roElSl3}
\begin{array}{c}
a=-2n-\frac{1}{2n}(\Div^*{\rho})(\xi),\qquad
b=-\frac{1}{2n}(\Div{\rho})(\xi),\\[6pt]
c=\frac{1}{2n}\bigl\{(\Div{\rho})(\xi)+(\Div^*{\rho})(\xi)\bigr\},
\end{array}
\end{equation}
where $\Div$ and $\Div^*$ stand for the divergence with respect to $g$ and $\g$, respectively.
\end{proposition}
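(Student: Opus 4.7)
The plan is to derive all three statements by plugging the para-Sasaki-like identities~\eqref{defSl}--\eqref{curSl} into the general para-Einstein-like identities~\eqref{roEl1}--\eqref{roEl2}, and then, for the coefficient formulas, contracting the resulting expression against the two available inverse metrics. For~\eqref{tauElSl2}, the equality $\rho(\xi,\xi)=a+b+c$ from~\eqref{roEl1} combined with $\rho(\xi,\xi)=-2n$ from~\eqref{curSl} gives $a+b+c=-2n$, and substituting into~\eqref{tauEl}, rewritten as $\tau=2na+(a+b+c)$, yields $\tau=2n(a-1)$.

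For~\eqref{roElSl}, I would insert the para-Sasaki-like formulas into~\eqref{roEl2}. The identity $\n_x\xi=\f x$ from~\eqref{curSl} together with the symmetry in~\eqref{strM2} gives $g(\n_x\xi,y)=g(x,\f y)$, which accounts for the $(b+c)$-term. For the $b$-term I would start from the second line of~\eqref{defSl}, obtaining $g\bigl((\n_x\f)y,z\bigr)=-g(\f x,\f y)\eta(z)-\eta(y)g(\f^2 x,z)$, and then use $g(\f^2 x,z)=g(x,z)-\eta(x)\eta(z)=g(\f x,\f z)$ via~\eqref{strM} to reach the claimed expression.

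For~\eqref{roElSl3} I would specialise~\eqref{roElSl} at $z=\xi$ to obtain
\[
(\n_x\rho)(y,\xi)=(b+c)\,g(x,\f y)-b\,g(\f x,\f y),
\]
and then contract successively against $g^{ij}$ and $\g^{ij}$. The $g$-traces $g^{ij}g(e_i,\f e_j)=\tr\f=0$ and $g^{ij}g(\f e_i,\f e_j)=(2n+1)-1=2n$ give $(\Div\rho)(\xi)=-2nb$, while the analogous $\g$-traces evaluate to $\g^{ij}g(e_i,\f e_j)=2n$ and $\g^{ij}g(\f e_i,\f e_j)=0$, yielding $(\Div^*\rho)(\xi)=2n(b+c)$. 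The three stated formulas then follow by solving this linear system together with the relation $a+b+c=-2n$.

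The main obstacle is computing the $\g$-traces, since $\g$ is indefinite and its inverse is not immediately available from the structural data. I would handle this either by deriving the explicit expression $\g^{ij}=\f^i_l\,g^{lj}+\xi^i\xi^j$ directly from $\f^2=I-\eta\otimes\xi$, $\f\xi=0$ and $\eta\circ\f=0$, or, more cleanly, by passing to a $\f$-adapted orthonormal frame $\{\xi;e_1,\ldots,e_n,\f e_1,\ldots,\f e_n\}$ in which both $g$ and $\g$ block-diagonalise, reducing the two contractions to mirror-symmetric sums that can be read off by inspection.
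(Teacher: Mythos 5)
Your proposal is correct and follows essentially the same route as the paper: $\rho(\xi,\xi)=-2n$ from \eqref{curSl} matched against \eqref{roEl1} and \eqref{tauEl} for \eqref{tauElSl2}, substitution of \eqref{defSl} and $\n_x\xi=\f x$ into \eqref{roEl2} for \eqref{roElSl}, and traces over $x,y$ with respect to $g$ and $\g$ (using $\g^{ij}=\f^j_kg^{ik}+\xi^i\xi^j$) for \eqref{roElSl3}. Your computed trace values and the resulting linear system all check out, so this is just a fully detailed version of the paper's terse argument.
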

\begin{proof}
Using \eqref{tauEl} and the last equalities of \eqref{curSl} and \eqref{roEl1}, we obtain \eqref{tauElSl2}.
The next expression \eqref{roElSl} follows from \eqref{defSl} and \eqref{roEl2}.
We obtain the assertions in \eqref{roElSl3} by taking the traces over $x$ and $y$ in \eqref{roElSl} with respect to both metrics and using that $\g^{ij}=\f^j_kg^{ik}+\xi^i\xi^j$.
\end{proof}

\begin{corollary}\label{cor:ElSl}
Let $\M$ be a para-Sasaki-like para-Ein\-stein-like manifold with constants $(a,b,c)$. Then:
\begin{enumerate}
\item[(i)] A necessary and sufficient condition $\M$ to be scalar-flat is $a=1$.

\item[(ii)] A necessary and sufficient condition $\M$ to be Ricci-sym\-met\-ric is to be an Einstein manifold.

\item[(iii)] The Ricci tensor of $\M$ is $\eta$-parallel and parallel along $\xi$.

\item[(iv)] A necessary and sufficient condition $\M$ to be $\eta$-Einstein is $\Div Q\in\ker\eta$.

\item[(v)] A necessary and sufficient condition $\M$ to be Einstein is $\Div{Q}, \Div^*{Q}\in\ker\eta$.

\end{enumerate}
\end{corollary}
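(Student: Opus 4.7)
The plan is to derive all five assertions as algebraic consequences of the formulas \eqref{tauElSl2}, \eqref{roElSl}, and \eqref{roElSl3} established just above, reducing each claim to a simple condition on the constants $a$, $b$, $c$.

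Four of the parts follow essentially by inspection. For (i), the identity $\tau=2n(a-1)$ gives $\tau=0\Leftrightarrow a=1$ immediately. For (iii), I inspect \eqref{roElSl}: when $y,z\in\ker\eta$ every term contains a factor $\eta(y)$ or $\eta(z)$ and hence vanishes, while when $x=\xi$ every remaining factor has the form $g(\xi,\f\cdot)$ or $g(\f\xi,\f\cdot)$, which vanishes by $\eta\circ\f=0$ and $\f\xi=0$. For (iv) and (v), \eqref{roElSl3} yields $(\Div\rho)(\xi)=-2nb$ and $(\Div\rho)(\xi)+(\Div^*\rho)(\xi)=2nc$; interpreting $\Div Q\in\ker\eta$ as $(\Div\rho)(\xi)=0$ (and analogously for $\Div^*Q$), the hypotheses translate to $b=0$ and to $b=c=0$ respectively, which under \eqref{defEl} are precisely the $\eta$-Einstein and Einstein conditions.

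The nontrivial equivalence is (ii). The backward direction is immediate: if $\M$ is Einstein then $b=c=0$, and the right-hand side of \eqref{roElSl} vanishes identically. For the forward direction, setting $z=\xi$ in \eqref{roElSl} and using $\f\xi=0$ together with $g(\f x,\f y)=g(x,y)-\eta(x)\eta(y)$ yields
\[
(\n_x\rho)(y,\xi)=(b+c)\,g(x,\f y)-b\,g(x,y)+b\,\eta(x)\eta(y).
\]
Assuming $\n\rho=0$ and restricting to $y\in\ker\eta$, the non-degeneracy of $g$ forces $(b+c)\,\f y=b\,y$ for every such $y$. Applying $\f$ and using $\f^2=\mathrm{id}$ on $\ker\eta$ produces the companion identity $(b+c)\,y=b\,\f y$, and combining the two gives $(b+c)^2=b^2$, hence $c(c+2b)=0$. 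The main obstacle is to disentangle $b$ and $c$ from here: in each of the two cases $c=0$ and $c=-2b$, a nonzero $b$ would force $\f|_{\ker\eta}=\pm\mathrm{id}$, contradicting $\tr\f|_{\ker\eta}=0$ (which follows from $\tr\f=0$ and $\f\xi=0$, since $\dim\ker\eta=2n\ge 2$). Therefore $b=c=0$, which is the Einstein condition, completing the proof.
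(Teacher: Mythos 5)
Your proposal is correct and follows essentially the same route as the paper, which derives all five parts from \eqref{tauElSl2}, \eqref{roElSl} and \eqref{roElSl3} together with the identity $(\Div\rho)(z)=g(\Div Q,z)$. The only substantive difference is that you spell out the forward direction of (ii) — which the paper dismisses as ``easy to check'' — via the substitution $z=\xi$ in \eqref{roElSl}, the resulting relations $(b+c)\f y=b\,y$ and $(b+c)y=b\,\f y$ on $\ker\eta$, and the contradiction with $\tr\f=0$; that argument is sound.
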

\begin{proof}
It is easy to check the {validity} of (i) and (ii), bearing in mind \eqref{tauElSl2} and \eqref{roElSl}, respectively.
The truthfulness of (iii) comes from $\left(\n\rho\right)|_{\ker\eta}=0$ and  $\n_\xi\rho=0$, which are corollaries of \eqref{roElSl}.
{Similarly, we obtain} that (iv) and (v) are valid using the value of $b$ in \eqref{roElSl3} and the identities
$\left(\n_x \rho\right)(y,z)=g\left(\left(\n_x Q\right)y,z\right)$
and
$(\Div\rho)(z)=g(\Div Q,z)$.
\end{proof}

Moreover, {using \eqref{tauElSl2},} we obtain that {each} Einstein para-Sasaki-like $\M$ has a scalar curvature $\tau=-2n(2n+1)$.

\subsection{{ApapR} manifolds with a torse-forming Reeb vector field}\label{sec:tf}

It is known that a vector field $\xi$ is called \emph{torse-forming} if $\n_x \xi=f\,x+\al(x)\xi$ {is valid} for a smooth function $f$ and an 1-form $\al$ on the manifold. The 1-form $\al$ on an apapR manifold $\M$ is determined by $\al=-f\,\eta$, {taking into account} the last equality in \eqref{strM2}. Then, we have the following equivalent equalities
\begin{equation}\label{tf}
\begin{array}{l}
		\n_x \xi=f\,\f^2x,\qquad \left(\n_x \eta \right)(y)=f\,g(\f x,\f y).
		\end{array}
\end{equation}

In the further investigations we omit the trivial case when $f = 0$.

\begin{proposition}\label{rem:tf}
Each $\M$ with torse-forming $\xi$ belongs to
the class $\F_1\oplus\F_2\oplus\F_3\oplus\F_5\oplus\F_6\oplus\F_{10}$.
Moreover, {among the basic classes, only} $\F_5$ can contain such a manifold.
\end{proposition}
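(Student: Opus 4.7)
The plan is to compute the tensor $F$ explicitly under the torse-forming condition and then match its shape against the defining relations of the eleven basic classes $\F_1,\dots,\F_{11}$ from \cite{ManSta}. The key input is the identity $(\n_x\eta)y=-F(x,\f y,\xi)$ from \eqref{F-prop}, which converts the hypothesis \eqref{tf} into a constraint on $F$.

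First, I would equate
\[
-F(x,\f y,\xi)=(\n_x\eta)y=f\,g(\f x,\f y),
\]
substitute $y\to \f y$, and use $\f^2=I-\eta\otimes\xi$, $\eta\circ\f=0$, together with the identity $F(x,\xi,\xi)=0$ (an immediate consequence of $\f\xi=0$). This collapses the expression to $F(x,y,\xi)=-f\,g(\f x,y)$, and by the symmetry $F(x,y,z)=F(x,z,y)$ also $F(x,\xi,z)=-f\,g(\f x,z)$. These identities pin down completely the part of $F$ containing $\xi$ in the last two slots, while the horizontal component $F(x,\f y,\f z)$ is left unconstrained by the torse-forming condition.

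Next, I would compare this explicit shape with the characteristic equations of each of the eleven basic classes in \cite{ManSta}. The classes $\F_4$, $\F_7$, $\F_8$, $\F_9$, $\F_{11}$ either prescribe a distinct tensorial form for the $\xi$-slot of $F$ (in $\F_4$, for instance, one has $F(x,y,\xi)\propto g(\f x,\f y)$ rather than our $g(\f x,y)$) or impose vanishing conditions on projections that are non-zero in our setting. Ruling each of them out leaves $\F_1\oplus\F_2\oplus\F_3\oplus\F_5\oplus\F_6\oplus\F_{10}$ as the smallest sum of basic classes that contains the admissible $F$, which proves the first assertion.

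For the second assertion, observe that since $f\neq 0$, the $\eta$-part $-f\bigl[\eta(y)g(\f x,z)+\eta(z)g(\f x,y)\bigr]$ is non-trivial, whereas the defining tensors of $\F_1,\F_2,\F_3,\F_6,\F_{10}$ all vanish when $\xi$ is plugged into any of the last two arguments. Hence these five classes cannot by themselves contain $\M$, and only $\F_5$, whose defining tensor has precisely the $\eta$-structure exhibited above, remains as a possibility; it is realised when the horizontal component $F(x,\f y,\f z)$ vanishes. The main obstacle I anticipate is the careful case-by-case identification of how our explicit $F$ sits inside (or outside) each of the eleven components of the Manev--Staikova decomposition, which is bookkeeping-heavy but direct once the definitions in \cite{ManSta} are laid side by side with our formulae.
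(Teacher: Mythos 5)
Your proposal is correct and follows essentially the same route as the paper: both convert the torse-forming hypothesis \eqref{tf} into information about the $\xi$-components of $F$ via \eqref{F-prop} and then defer the identification to the component table of the basic classes in \cite{ManTav57}. The only difference is that you extract the full vertical component $F(x,y,\xi)=-f\,g(\f x,y)$ (which correctly implies the paper's trace conditions $\ta^*(\xi)=-2nf$, $\ta(\xi)=\om=0$ and makes the subsequent class-by-class matching cleaner), so this is a slightly more explicit version of the same argument.
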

\begin{proof}
Taking into account \eqref{F-prop} and \eqref{t}, {equalities} \eqref{tf} imply $\ta^*(\xi)=-2n\,f$ and $\ta(\xi)=\om=0$.
Therefore, bearing in mind the components of $F$ in the basic classes $\F_i$, given in \cite{ManTav57},
we get the {statement}.
\end{proof}

{As a result of the latter proposition, a manifold} $\M\in\F_5$ with torse-forming $\xi$ is {determined by}
\begin{equation}\label{tfF5}
\begin{array}{l}
		\left(\n_x \f \right)y=-f\{g(x,\f y)\xi+\eta(y)\f x\}.
\end{array}
\end{equation}

\begin{proposition}
{The Reeb vector field $\xi$ of each} para-Sasaki-like manifold $\M$ is not {torse-forming}.
\end{proposition}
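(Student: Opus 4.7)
The plan is to derive a contradiction by comparing the two available formulas for $\n_x\xi$ on a hypothetical $\M$ that is simultaneously para-Sasaki-like and has torse-forming Reeb vector field. By \eqref{curSl} the para-Sasaki-like condition gives $\n_x\xi=\f x$, while \eqref{tf} supplies $\n_x\xi=f\,\f^2 x$ for some smooth function $f$. Equating these would yield the pointwise operator identity
\begin{equation*}
\f = f\,\f^2 \quad\text{on } T\mathcal{M}.
\end{equation*}

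The next step is to take the trace of both sides. The left-hand side vanishes by $\tr\f=0$ from \eqref{strM}, whereas $\tr\f^2=\tr(I-\eta\otimes\xi)=(2n+1)-\eta(\xi)=2n$, so the trace identity would collapse to $0=2nf$, forcing $f\equiv 0$ on $\mathcal{M}$. Substituting $f=0$ back into $\f=f\,\f^2$ would then give $\f\equiv 0$ on $T\mathcal{M}$, which contradicts $\f^2 = I-\eta\otimes\xi\neq 0$.

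There is no genuine obstacle in this argument; the whole proof reduces to a single trace computation combined with the normalisation $\tr\f=0$ built into \eqref{strM}. As an independent sanity check one could argue classification-theoretically: \remref{rem:Sl} records that the Lee form of a para-Sasaki-like manifold satisfies $\ta=-2n\,\eta$, so $\ta(\xi)=-2n$, while the proof of \propref{rem:tf} shows that a torse-forming $\xi$ enforces $\ta(\xi)=0$; comparing these yields the same impossibility $-2n=0$. I would present the direct trace argument as the proof and mention the classification route only as a remark.
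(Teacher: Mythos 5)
Your proof is correct, but it takes a genuinely different route from the paper's. The paper's own argument is exactly your ``sanity check'': by \remref{rem:Sl} a para-Sasaki-like manifold lies in the basic class $\F_4$ (equivalently, $\ta(\xi)=-2n$), while \propref{rem:tf} shows a torse-forming $\xi$ forces membership in $\F_1\oplus\F_2\oplus\F_3\oplus\F_5\oplus\F_6\oplus\F_{10}$ (equivalently, $\ta(\xi)=0$), and these are incompatible. Your primary argument instead bypasses the classification machinery entirely: equating $\n_x\xi=\f x$ from \eqref{curSl} with $\n_x\xi=f\,\f^2x$ from \eqref{tf} gives $\f=f\,\f^2$, and tracing with $\tr\f=0$ and $\tr\f^2=2n$ from \eqref{strM} forces $f=0$, whence $\f=0$, contradicting $\f^2=I-\eta\otimes\xi\neq0$. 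This is more elementary and self-contained (it needs neither the Lee forms nor the component description of the classes $\F_i$), and it has the small additional merit of also ruling out the degenerate case $f\equiv0$ that the paper sets aside by convention; one could even shorten it further by restricting $\f=f\,\f^2$ to $\ker\eta$, where $\f^2=I$, so that $\f=f\,I$ there and $\f^2=f^2 I=I$ combined with $\tr\f=2nf=0$ gives the contradiction $f^2=1$ and $f=0$ at once. The paper's route, on the other hand, is a one-line consequence of results it has already established and situates the statement within the classification. Both are valid proofs of the proposition.
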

\begin{proof}
Taking into account \remref{rem:Sl} and \propref{rem:tf}, we prove the assertion.
\end{proof}

\begin{corollary} Let an apapR manifold $\M$ with torse-forming $\xi$ be para-Einstein-like.
Then, a necessary and sufficient condition $\M$ to be Ricci-symmetric is to be an Einstein manifold.
\end{corollary}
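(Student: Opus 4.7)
The plan is to substitute the torse-forming hypothesis \eqref{tf} into the formula \eqref{roEl2} for $\n\rho$ and impose $\n\rho=0$. From \eqref{tf} I obtain $g(\n_x\xi,y)=f\,g(\f x,\f y)$, and differentiating $\f\xi=0$ combined with \eqref{tf} and the identity $\f^3=\f$ (immediate from \eqref{strM}) yields $(\n_x\f)\xi=-f\,\f x$. Substituting these into \eqref{roEl2} gives an expression for $(\n_x\rho)(y,z)$ whose only non-explicit piece is $b\,g\bigl((\n_x\f)y,z\bigr)$.

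The main obstacle is that under just the torse-forming assumption $(\n_x\f)y$ is not available in closed form for general $y$: by \propref{rem:tf} the manifold lies a priori in the direct sum $\F_1\oplus\F_2\oplus\F_3\oplus\F_5\oplus\F_6\oplus\F_{10}$, so no single canonical formula for $\n\f$ applies. The way around this is to evaluate the equation $(\n_x\rho)(y,z)=0$ at $y=\xi$, where $(\n_x\f)\xi$ is completely determined. After dividing by the nonzero $f$, the resulting relation is a purely tensorial identity linking $g(\f x,z)$ and $g(\f x,\f z)$ with coefficients $-b$ and $b+c$ respectively.

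Restricting $x$ to $\ker\eta$ (so that $g(\f x,\f z)=g(x,z)$) reduces this to $(b+c)\,g(x,z)=b\,g(\f x,z)$ for all $x\in\ker\eta$ and all $z\in T\mathcal{M}$. Since on $\ker\eta$ one has $\f^2=I$ and $\tr\f=0$, the endomorphism $\f$ splits this distribution into equal-dimensional $\pm 1$-eigenspaces $D^{\pm}$; testing the identity on $D^{+}$ forces $c=0$, while testing it on $D^{-}$ forces $2b+c=0$, whence $b=c=0$ and $\M$ is Einstein. The converse is immediate, as $\rho=a\,g$ with $a$ constant is parallel by $\n g=0$.
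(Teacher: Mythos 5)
Your proof is correct and follows the same route the paper sketches for this corollary: substitute \eqref{tf} into \eqref{roEl2}, impose $\n\rho=0$, and force $b=c=0$ (the converse being immediate from $\n g=0$). Your evaluation at $y=\xi$ to eliminate the otherwise undetermined term $b\,g\bigl(\left(\n_x\f\right)y,z\bigr)$, followed by the $\pm1$-eigenspace argument for $\f$ on $\ker\eta$, is precisely the detail that the paper's one-line reference to (ii) of \corref{cor:ElSl} leaves implicit.
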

\begin{proof}
{Similarly to (ii) {of} \corref{cor:ElSl}, we obtain the statement,} using \eqref{roEl2} and \eqref{tf}.
\end{proof}

\section{Proofs of Theorem A and Theorem B}\label{sect-1}
%

A \emph{Ricci soliton} is a pseudo-Riemannian manifold $(M,g)$ which admits a smooth non-zero vector field $v$ on $M$ such that \cite{Ham82}
\begin{equation*}\label{Rs}
\begin{array}{l}
\rho =-\frac12 \mathcal{L}_v g - \lm\, g,
\end{array}
\end{equation*}
where $\mathcal{L}$ stands for the Lie derivative and $\lm$ is a constant.
In particular, a Ricci soliton with negative, zero or positive $\lm$ is called \emph{shrinking}, \emph{steady} or \emph{expanding}, respectively \cite{ChoLuNi}.

The authors of \cite{ChoKim} made a generalization of the Ricci soliton on an almost contact metric manifold $(M,\varphi,\xi,\eta,g)$ called an \emph{$\eta$-Ricci soliton} determined by
$
\rho=-\frac12 \mathcal{L}_v g  - \lm\, g - \nu\, \eta\otimes\eta$,
where $\nu$ is also a constant.

{Due to the presence of two associated metrics on almost contact B-metric manifolds, a subsequent} generalization of the Ricci soliton and the $\eta$-Ricci soliton {is presented in \cite{MM-Sol1},} called a \emph{Ricci-like soliton}. Here, we introduce our generalization in terms of apapR manifolds as follows.
\begin{definition}
An {apapR} manifold $\M$ is called a
\emph{para-Ricci-like soliton} with potential vector field $\xi$ and constants $(\lm,\mu,\nu)$ if its Ricci tensor $\rho$ satisfies:
\begin{equation}\label{defRl}
\begin{array}{l}
\rho=-\frac12 \mathcal{L}_{\xi} g - \lm\, g - \mu\, \g - \nu\, \eta\otimes \eta.
\end{array}
\end{equation}
\end{definition}

Taking into account \eqref{defRl} and
\begin{equation}\label{Lg=nxi}
\left(\mathcal{L}_{\xi} g \right)(x,y)=g(\n_x \xi,y)+g(x,\n_y \xi),
\end{equation}
we obtain the scalar curvature
\[
\tau=-\Div\xi-(2n+1)\lm -\mu-\nu.
\]

\begin{proposition}\label{prop-RlEl}
Let $\M$ be a para-Einstein-like manifold with constants $(a,b,c)$ and let $\M$ admit a para-Ricci-like soliton with potential $\xi$ and constants $(\lm,\mu,\nu)$.
Then:
\begin{enumerate}
	\item[(i)] $a+b+c=-\lm-\mu-\nu$;

	\item[(ii)] $\n_{\xi}\xi=0$, \ie $\xi$ is geodesic;

	\item[(iii)] $\left(\n_{\xi} \f\right)\xi=0$, $\n_{\xi} \eta=0$, $\om=0$;

	\item[(iv)] $\left(\n_{\xi}\rho\right)(y,z)=b\,g\bigl(\left(\n_{\xi}\f\right)y,z\bigr)$.

\end{enumerate}
\end{proposition}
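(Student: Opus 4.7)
The plan is to equate the two expressions for $\rho$ coming from the hypotheses and then test the resulting identity against $\xi$.

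First I would subtract \eqref{defEl} from \eqref{defRl} to obtain the tensorial identity
\[
(a+\lm)g+(b+\mu)\g+(c+\nu)\eta\otimes\eta=-\tfrac12\LL_\xi g.
\]
Using \eqref{Lg=nxi} together with $\eta(\n_x\xi)=0$ from \eqref{strM2}, one sees $(\LL_\xi g)(x,\xi)=g(x,\n_\xi\xi)$, while on the left side $g(x,\xi)=\g(x,\xi)=\eta(x)$. Substituting $y=\xi$ therefore yields
\[
\bigl[(a+\lm)+(b+\mu)+(c+\nu)\bigr]\eta(x)=-\tfrac12 g(x,\n_\xi\xi).
\]
Setting $x=\xi$ and using $g(\xi,\n_\xi\xi)=\eta(\n_\xi\xi)=0$ gives $(a+b+c)+(\lm+\mu+\nu)=0$, proving (i). Reinserting this into the previous line gives $g(x,\n_\xi\xi)=0$ for every $x$, hence $\n_\xi\xi=0$, which is (ii).

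For (iii) I would use (ii) and the structure relations of \eqref{strM}: differentiating $\f\xi=0$ along $\xi$ gives $(\n_\xi\f)\xi=-\f(\n_\xi\xi)=0$, and from $(\n_x\eta)(y)=g(\n_x\xi,y)$ (which is the last identity in \eqref{F-prop}) we get $\n_\xi\eta=0$; finally $\omega(z)=F(\xi,\xi,z)=g((\n_\xi\f)\xi,z)=0$.

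For (iv) I would differentiate \eqref{defEl} along $\xi$: since $\n g=0$ for the Levi-Civita connection, and since $\n_\xi(\eta\otimes\eta)=0$ by (iii), only the term involving $\g$ survives. Using $\g(y,z)=g(y,\f z)+\eta(y)\eta(z)$ and $\n_\xi\eta=0$ one gets $(\n_\xi\g)(y,z)=g(y,(\n_\xi\f)z)$, which equals $g((\n_\xi\f)y,z)$ thanks to the symmetry $F(\xi,y,z)=F(\xi,z,y)$ from \eqref{F-prop}. This produces (iv). The whole argument is essentially bookkeeping; the only place one might stumble is handling $(\n_\xi\g)$ correctly, so I would take care to invoke the $F$-symmetry at exactly that step.
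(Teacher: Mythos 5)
Your proposal is correct and follows essentially the same route as the paper: equating \eqref{defEl} and \eqref{defRl} via \eqref{Lg=nxi}, substituting $\xi$ to get (i) and (ii), and deducing (iii) from the structure equations and the definition of $\om$. For (iv) the paper simply plugs $\n_\xi\xi=0$ into the previously established formula \eqref{roEl2}, while you re-derive that formula at $x=\xi$ by differentiating \eqref{defEl} directly --- the same computation in substance.
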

\begin{proof}
By virtue of \eqref{defEl}, \eqref{defRl} and \eqref{Lg=nxi}, we get
\begin{equation*}\label{ElRl}
\begin{array}{l}
g(\n_x \xi,y)+g(\n_y \xi,x)=-2\{(a+\lm)g(x,y)+(b+\mu)g(x,\f y)\\
\phantom{g(\n_x \xi,y)+g(\n_y \xi,x)=-2\{}
+(b+c+\mu+\nu)\eta(x)\eta(y)\}.
\end{array}
\end{equation*}
By substituting $y$ for $\xi$ in the latter equality, we prove the truthfulness of (i) and (ii).
The validity of (iii) comes from \eqref{F=nfi}, \eqref{F-prop} and \eqref{t}.
Finally, \eqref{roEl2} and $\n_{\xi} \xi=0$ yield (iv).
\end{proof}

\begin{corollary}
Let $\M$ be a para-Einstein-like manifold with constants $(a,b,c)$ and let $\M$ admit a para-Ricci-like soliton with potential $\xi$ and constants $(\lm,\mu,\nu)$. Then:
\begin{enumerate}
	\item[(i)] $\M$ does not belong to $\F_{11}$ or to {a direct sum of $\F_{11}$} with other basic classes.
	\item[(ii)] A necessary and sufficient condition the Ricci tensor of $\M$ to be parallel along $\xi$ is $\M$ not to belong to $\F_{10}$ or to {a direct sum of $\F_{10}$} with other basic classes.
	\item[(iii)] A necessary and sufficient condition the Ricci tensor of $\M$ to be parallel along $\xi$ is $\M$ to be $\eta$-Einstein.
\end{enumerate}
\end{corollary}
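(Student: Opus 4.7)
The plan is to base everything on Proposition~\ref{prop-RlEl}, translating its analytic conclusions into structural facts about the basic classes via the Manev--Staikova classification \cite{ManSta}. The two pieces of Proposition~\ref{prop-RlEl} that carry all the weight are part (iii), which supplies $(\n_\xi\f)\xi=0$, $\n_\xi\eta=0$ and $\om\equiv 0$, and part (iv), which reads $(\n_\xi\rho)(y,z)=b\,g\bigl((\n_\xi\f)y,z\bigr)$.

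For item (i), I would invoke part (iii) to obtain $\om\equiv 0$ and then quote the classification: $\F_{11}$ is precisely the basic class whose only non-vanishing defining datum is the Lee form $\om=F(\xi,\xi,\cdot)$, so the vanishing of $\om$ rules out any basic-class decomposition of $\M$ containing an $\F_{11}$ summand.

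For items (ii) and (iii), formula (iv) is the main lever. The sufficient direction of (iii) -- that $b=0$ forces $\n_\xi\rho=0$ -- is immediate from (iv). For (ii), the characterisation of $\F_{10}$ as the basic class whose only non-vanishing defining datum lives in $(\n_\xi\f)|_{\ker\eta}$, combined with the identity $(\n_\xi\f)\xi=0$ from Proposition~\ref{prop-RlEl}(iii), shows that ``$\M$ has no $\F_{10}$ summand'' is equivalent to $\n_\xi\f\equiv 0$ on all of $T\mathcal{M}$; substituting into (iv) then gives the sufficient direction of (ii).

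The main obstacle will be the converse common to (ii) and (iii). From $\n_\xi\rho=0$, formula (iv) yields only the disjunction ``$b=0$ or $\n_\xi\f\equiv 0$'', whereas stating (ii) and (iii) as genuine necessary and sufficient conditions demands that these two scenarios coincide in the present context. To bridge this gap I would test the soliton equation \eqref{defRl} against horizontal pairs $(y,z)$ and combine the resulting identities with the scalar relation $a+b+c=-\lm-\mu-\nu$ from Proposition~\ref{prop-RlEl}(i); this couples the coefficient $b$ to the geometric data carried by $\n\f$, and it is at precisely this step that I expect the equivalence of ``$b=0$'' with ``$\n_\xi\f\equiv 0$'' to emerge, completing the corollary.
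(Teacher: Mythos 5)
Your handling of (i) and of the sufficiency directions of (ii) and (iii) coincides with the paper's own argument: the printed proof consists precisely of citing parts (iii) and (iv) of \propref{prop-RlEl} together with the component characterization of the classes $\F_i$ from \cite{ManTav57}, so that $\om=0$ kills any $\F_{11}$-summand, while $\left(\n_{\xi}\rho\right)(y,z)=b\,g\bigl(\left(\n_{\xi}\f\right)y,z\bigr)=b\,F(\xi,y,z)$ reduces $\n_\xi\rho=0$ to the vanishing of the product $b\cdot F(\xi,\cdot,\cdot)$, where $F(\xi,\cdot,\cdot)$ restricted to $\ker\eta$ is exactly the $\F_{10}$-datum once $\om=0$ and $(\n_\xi\f)\xi=0$ are in force.

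The genuine gap is in your proposed repair of the converse directions, and it is structural rather than computational. Testing \eqref{defRl} against horizontal pairs constrains only the symmetrization of $g(\n_{\cdot}\xi,\cdot)=-F(\cdot,\f\cdot,\xi)$, i.e.\ components of $F$ carrying $\xi$ in the last slot and horizontal entries in the first two. But the $\F_{10}$-component of $F$ is of the form $\eta(x)F(\xi,\f^2y,\f^2z)$: it vanishes whenever the first argument is horizontal or one of the last two arguments is $\xi$, so it contributes nothing to $\n\xi$, hence nothing to $\LL_\xi g$ or to the soliton equation, and \propref{prop-RlEl}(i) is likewise blind to it. Consequently no coupling between $b$ and $\n_\xi\f|_{\ker\eta}$ can be extracted at the step you indicate, and the equivalence of ``$b=0$'' with ``$\n_\xi\f\equiv0$'' will not emerge; indeed, if both (ii) and (iii) held as unconditional equivalences they would force ``no $\F_{10}$-part'' to be equivalent to ``$\eta$-Einstein,'' which does not follow from the hypotheses. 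The identity in \propref{prop-RlEl}(iv) genuinely delivers only the disjunction $b=0$ or $F(\xi,\cdot,\cdot)=0$. Be aware that the paper's one-sentence proof does not close this gap either: (ii) and (iii) must be read off $b\cdot F(\xi,\cdot,\cdot)=0$ with the tacit genericity assumptions $b\neq0$ for (ii) and a nonzero $\F_{10}$-part for (iii). So your forward directions stand; the bridge you sketch for the converses would fail.
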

\begin{proof}
The characterization of the basic classes $\F_i$ by the components of $F$, given in \cite{ManTav57}, and the assertions (iii) and (iv) in \propref{prop-RlEl}, complete the proof of the corollary.
\end{proof}

{Using the expression of $\n_x \xi$ from \eqref{curSl} in} \eqref{Lg=nxi} and \eqref{defRl}, we obtain that \eqref{defRl} {takes the form}
\begin{equation}\label{SlRl-rho}
\rho=-\lm g-(1+\mu)\g+(1-\nu)\eta\otimes\eta,
\end{equation}
which coincides with \eqref{defEl} under conditions \eqref{SlElRl-const} in Theorem A.
The equality $a+b+c=-2n$ comes from \eqref{tauElSl2}, whereas
$\lm+\mu+\nu=2n$ is a result of \eqref{SlRl-rho} for $\rho(x,\xi)$ and the corresponding formula from \eqref{curSl}.
Therefore, the main assertion in Theorem A is proved.

We get the truthfulness of the assertions (i), (ii), (iii) and (iv) in Theorem A as corollaries of the main statement for the cases $\mu=0$, $\mu=\nu=0$, $b=0$ and $b=c=0$, respectively.
This completes the proof of Theorem A.

Now we focus our considerations on Theorem B.
Let $\xi$ on an apapR manifold $\M$ be torse-forming with function $f$ and let $\M$ admit a para-Ricci-like soliton with potential $\xi$ and constants $(\lm,\mu,\nu)$.
Taking into account \eqref{tf}, \eqref{defRl} and \eqref{Lg=nxi}, we get the form of the Ricci tensor of $\M$ as follows
\begin{equation*}\label{tfRl-rho}
\rho=-(\lm+f) g-\mu\g-(\nu-f)\eta\otimes\eta.
\end{equation*}
{Bearing} in mind \eqref{defEl}, the latter identity shows that a necessary and sufficient condition $\M$ to be para-Einstein-like is $f$ to be a constant. Then, the main statements \eqref{tfElRl-const} in Theorem B are valid.

Vice versa, let $\M$ have a torse-forming {vector field} $\xi$ with function $f$ and let $\M$ be para-Einstein-like with constants $(a,b,c)$. Applying \eqref{tf} and \eqref{defEl} in \eqref{Lg=nxi} and \eqref{defRl} for $(\lm,\mu,\nu)$, we get that \eqref{defRl} is satisfied if and only if \eqref{tfElRl-const} holds. This is true when $f$ is a constant.

We get the truthfulness of the assertions (i), (ii) and (iii) in Theorem B as corollaries of the main statement for the cases $b=\mu=0$, $b=\mu=\nu=0$ and  $b=c=\mu=0$, respectively.
This completes the proof of Theorem B.

\section{Some consequences of the main theorems}\label{sect-1}

\begin{proposition}\label{prop-RlEltf}
Let $\M$ be a para-Einstein-like manifold with constants $(a,b,c)$ and let $\M$ admits a para-Ricci-like soliton with potential $\xi$ and constants $(\lm,\mu,\nu)$.
Then $f$ is determined by:
\begin{equation}\label{tf1}
f=\varepsilon\sqrt{-\frac{a+b+c}{2n}}=\varepsilon\sqrt{\frac{\lm+\mu+\nu}{2n}}, \qquad \varepsilon=\pm 1.
\end{equation}
\end{proposition}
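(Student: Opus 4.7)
The plan is to read this proposition in the setting of Theorem B (so $\xi$ is torse-forming with function $f$, which by Theorem B must be constant) and then extract $f$ by computing $\rho(\xi,\xi)$ in two ways.

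First I would derive a closed formula for the curvature endomorphism $R(\,\cdot\,,\,\cdot\,)\xi$. Starting from $\n_x\xi=f\f^2 x=f(x-\eta(x)\xi)$ and $(\n_x\eta)(y)=f\bigl(g(x,y)-\eta(x)\eta(y)\bigr)$ from \eqref{tf}, and using the constancy of $f$, I would expand
\[
R(x,y)\xi=\n_x\n_y\xi-\n_y\n_x\xi-\n_{[x,y]}\xi
\]
term by term. Because $\n$ is torsion-free, the pieces $f(\n_x y-\n_y x)$ and $f\bigl[\eta(\n_x y)-\eta(\n_y x)\bigr]\xi$ are cancelled exactly by $\n_{[x,y]}\xi=f[x,y]-f\eta([x,y])\xi$. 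The remaining terms, which come from differentiating $\eta(y)\xi$ twice, are symmetric in $(x,y)$ in every slot except two, and the antisymmetrization leaves the clean expression
\[
R(x,y)\xi=f^2\bigl(\eta(x)y-\eta(y)x\bigr).
\]

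Next I would contract to obtain $\rho(\xi,\xi)$. Setting $y=\xi$ gives $R(x,\xi)\xi=-f^2\bigl(x-\eta(x)\xi\bigr)$; tracing the map $x\mapsto R(x,\xi)\xi$ produces $\rho(\xi,\xi)=-2nf^2$. On the other hand, the para-Einstein-like identity \eqref{roEl1} gives $\rho(\xi,\xi)=a+b+c$. Equating the two expressions yields $f^2=-\dfrac{a+b+c}{2n}$, and the square root produces the stated $\varepsilon=\pm 1$ ambiguity.

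Finally, the second equality in \eqref{tf1} is immediate from \propref{prop-RlEl}(i), which gives $a+b+c=-(\lm+\mu+\nu)$ under the joint hypotheses of being para-Einstein-like and admitting a para-Ricci-like soliton. The main obstacle in this argument is the direct expansion of $R(x,y)\xi$: one must carefully bookkeep the derivatives of $\eta(y)$ and of $\xi$ (the latter invoking $\n\xi$ a second time, producing the $f^2$), and verify that exactly the symmetric pieces cancel against $\n_{[x,y]}\xi$. Once that computation is executed correctly the remainder of the proof is purely algebraic.
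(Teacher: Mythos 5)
Your proposal is correct and follows essentially the same route as the paper: derive $R(x,y)\xi=f^2\{\eta(x)y-\eta(y)x\}$ from the torse-forming condition, contract to compare the resulting $\rho(\cdot,\xi)$ with the para-Einstein-like value $a+b+c$ via \eqref{roEl1}, and invoke \propref{prop-RlEl}(i) for the second equality in \eqref{tf1}. If anything, you are more explicit than the paper in flagging that the constancy of $f$ (guaranteed by Theorem B under these joint hypotheses) is needed for the curvature computation, which is a worthwhile clarification.
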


\begin{proof}
Using the form of $\n\xi$ in \eqref{tf}, we get
\begin{equation}\label{Rxif}
R(x,y)\xi=f^2\{\eta(x)y-\eta(y)x\},\qquad \rho(x,\xi)=2nf^2\eta(x).
\end{equation}
We get that $-2n f^2=a+b+c$ from \eqref{Rxif} and \eqref{roEl1}. This result and (i) of \propref{prop-RlEl} imply \eqref{tf1}.
\end{proof}

\begin{corollary}
The $\xi$-sections $\al=\Span\{x,\xi\}$ of an apapR manifold $\M$ with a torse-forming potential $\xi$ and constant $f$ have constant negative sectional curvatures $k(\al)=-f^2$.
\end{corollary}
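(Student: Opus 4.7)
The plan is to compute the sectional curvature of $\al = \Span\{x,\xi\}$ directly from its definition
\[
k(\al) = \frac{g(R(x,\xi)\xi, x)}{g(x,x)\,g(\xi,\xi) - g(x,\xi)^2}
\]
and to substitute the identity
\[
R(x,y)\xi = f^2\{\eta(x)y - \eta(y)x\}
\]
that already appears as \eqref{Rxif} in the proof of \propref{prop-RlEltf}. That identity was derived purely from the torse-forming condition $\n_x\xi = f\,\f^2 x$ together with the assumption that $f$ is constant (so that no $df$-terms survive when computing $\n_y\n_x\xi$), which is exactly the standing hypothesis of the present corollary.

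Next I would set $y = \xi$ in \eqref{Rxif} to obtain $R(x,\xi)\xi = f^2\bigl(\eta(x)\xi - x\bigr)$, and pair with $x$ via $g$ using $g(\xi,\xi) = 1$ and $g(x,\xi) = \eta(x)$ from \eqref{strM2}. The numerator reduces to $f^2\bigl(\eta(x)^2 - g(x,x)\bigr)$, and the denominator to $g(x,x) - \eta(x)^2$. By the last equality of \eqref{strM} this denominator equals $g(\f x,\f x)$, which is strictly positive whenever $x$ is not collinear with $\xi$, \ie precisely when $\al$ is a non-degenerate two-plane. Cancelling the common factor then yields $k(\al) = -f^2$, establishing both the constancy of the sectional curvature on every $\xi$-section and its negative sign.

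No serious obstacle is anticipated: the argument is a one-line substitution once \eqref{Rxif} is at hand. The only subtlety worth flagging is making sure \eqref{Rxif} is legitimately available in this context, \ie that it does not secretly rely on the para-Einstein-like hypothesis used elsewhere in \propref{prop-RlEltf}; a quick look confirms that its derivation uses only \eqref{tf} and the constancy of $f$, both of which are assumed here, so it may be quoted freely.
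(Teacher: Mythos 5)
Your proof is correct and follows the same route as the paper: the paper's own proof simply cites the identity $R(x,y)\xi=f^2\{\eta(x)y-\eta(y)x\}$ from \eqref{Rxif} and leaves the substitution $y=\xi$ and the resulting quotient implicit, which is exactly the computation you spell out. Your check that \eqref{Rxif} depends only on \eqref{tf} and the constancy of $f$, not on the para-Einstein-like hypothesis, is also accurate.
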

\begin{proof}
The form of the curvature tensor $R$ in \eqref{Rxif} completes the proof of the corollary.
\end{proof}

\begin{corollary}\label{cor:E}
A necessary and sufficient condition $\M$ to be Einstein with a negative scalar curvature $\tau$ is $\M$ to admit an $\eta$-Ricci soliton with potential $\xi$ and constants
$(\lm=-\tau/(2n+1)-f, \mu=0, \nu=f),$
where $f$ is the constant for the torse-forming vector field $\xi$
and $f=\varepsilon\sqrt{-\tau/(2n(2n+1))}$, $\varepsilon=\pm 1$.
\end{corollary}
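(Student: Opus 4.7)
The plan is to recognize this corollary as a specialization of Theorem B(iii) combined with \propref{prop-RlEltf}, using the scalar curvature identity \eqref{tauEl} to eliminate the Einstein parameter $a$ in favor of $\tau$. No essentially new computation should be required; it is a matter of unpacking what these two previously established results say in the purely Einstein case.

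First I would start from the Einstein hypothesis $\rho=a\,g$, i.e.\ constants $(a,0,0)$ in \eqref{defEl}. Substituting $b=c=0$ into \eqref{tauEl} yields $\tau=(2n+1)a$, hence $a=\tau/(2n+1)$. Inserting this into the soliton constants $(-a-f,\,0,\,f)$ furnished by Theorem B(iii) gives exactly the advertised triple
\[
(\lm,\mu,\nu)=\bigl(-\tau/(2n+1)-f,\;0,\;f\bigr),
\]
so the equivalence between the Einstein structure and the $\eta$-Ricci soliton with potential $\xi$ is an immediate consequence of Theorem B(iii) specialized to $b=c=0$.

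Next, to pin down $f$ explicitly, I would apply \propref{prop-RlEltf} with $b=c=0$, obtaining $f=\varepsilon\sqrt{-a/(2n)}=\varepsilon\sqrt{-\tau/(2n(2n+1))}$ with $\varepsilon=\pm 1$, which is precisely the formula stated in the corollary. The converse direction proceeds symmetrically: starting from an $\eta$-Ricci soliton with these constants and torse-forming $\xi$, Theorem B(iii) returns the Einstein condition, and \propref{prop-RlEltf} enforces the stated expression for $f$, so the two statements are indeed equivalent.

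The only point I would flag as a genuine obstacle—really just a consistency check—is the sign condition on $\tau$. Because Section \ref{sec:tf} explicitly excludes the trivial case $f=0$, the radicand $-\tau/(2n(2n+1))$ must be strictly positive, which is equivalent to $\tau<0$. Thus the negativity of the scalar curvature is not an extra ad hoc hypothesis but precisely the reality condition ensuring that $f$ is a nonzero real constant for the torse-forming $\xi$. Beyond verifying this sign, the corollary is just a rephrasing of Theorem B(iii) in which $a$ and $f$ are replaced by the intrinsic invariant $\tau$.
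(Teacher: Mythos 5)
Your proposal is correct and follows exactly the paper's own route: the paper likewise cites the expression of $\tau$ from \eqref{tauEl} in the Einstein case, \propref{prop-RlEltf} for the value of $f$, and assertion (iii) of Theorem B. Your additional remark that $\tau<0$ is precisely the reality/nonvanishing condition on $f$ is a sound observation that the paper leaves implicit.
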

\begin{proof}
The expression of $\tau$ in \eqref{tauEl} for an Einstein manifold, \propref{prop-RlEltf} and assertion (iii) of Theorem B complete the proof of this corollary.
\end{proof}

{By virtue of}  \propref{rem:tf}, we conclude the following
\begin{proposition}\label{prop-RlEltfRs}
Let $\M\in\F_5\setminus\F_0$ be a para-Einstein-like manifold with constants $(a,b,c)$ and let $\M$ admit a para-Ricci-like soliton with potential $\xi$ and constants $(\lm,\mu,\nu)$. Then:
\begin{enumerate}
\item[(i)]  A necessary and sufficient condition $\M$ to be Ricci-sym\-met\-ric is $\M$ to be an Einstein manifold.
\item[(ii)] The Ricci tensor of $\M$ is $\eta$-parallel and parallel along $\xi$.
\end{enumerate}
\end{proposition}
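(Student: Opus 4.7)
The plan is to mirror the reasoning of \corref{cor:ElSl}(ii)--(iii) with the $\F_5$ structure equation in place of the para-Sasaki-like one. Since $\M\in\F_5\setminus\F_0$ carries a torse-forming Reeb vector field, by \propref{rem:tf} its geometry is governed by \eqref{tfF5}, while $\n_x\xi$ is given by \eqref{tf} with a nonzero constant $f$ (established in the proof of Theorem B; the case $f=0$ would place $\M$ in $\F_0$). Substituting these into the general identity \eqref{roEl2}, I would compute the ingredients $g\bigl((\n_x\f)y,z\bigr)$ and $g(\n_x\xi,y)$ to obtain a closed-form expression for $(\n_x\rho)(y,z)$, each of whose terms carries a factor $\eta(y)$ or $\eta(z)$.

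From this explicit formula, part (ii) is immediate. Restricting $y,z$ to $\ker\eta$ kills every term on the right, giving $\eta$-parallelism of $\rho$; setting $x=\xi$ and using $\f\xi=0$, $\eta\circ\f=0$ and $\f^2\xi=0$ forces $(\n_\xi\rho)(y,z)=0$ for all $y,z$.

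For part (i), sufficiency is clear: if $\M$ is Einstein then $b=c=0$ and the right-hand side of the formula vanishes identically, so $\n\rho=0$. For necessity, assuming $\n\rho=0$, I would specialize $z=\xi$ and restrict $x,y$ to $\ker\eta$; the identity collapses to a linear relation of the form $-fb\,g(x,\f y)+f(b+c)\,g(x,y)=0$ on the paracontact distribution. Since $\f|_{\ker\eta}$ squares to the identity with nontrivial $\pm 1$-eigenspaces, testing against eigenvectors shows that $g|_{\ker\eta}$ and $g(\cdot,\f\cdot)|_{\ker\eta}$ are linearly independent as symmetric bilinear forms, so both coefficients must vanish; combined with $f\neq 0$, this yields $b=c=0$, meaning $\M$ is Einstein.

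The substitution into \eqref{roEl2} is a routine calculation; the main subtlety is the final separation-of-variables step, which hinges on the traceless almost paracomplex nature of $\f|_{\ker\eta}$ producing two genuinely independent bilinear invariants on the paracontact distribution.
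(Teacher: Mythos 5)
Your proposal is correct and follows essentially the same route as the paper: substitute \eqref{tf} and \eqref{tfF5} into \eqref{roEl2} to get a closed-form expression for $\n\rho$ in which every term carries a factor $\eta(y)$ or $\eta(z)$, then argue as in (ii) and (iii) of \corref{cor:ElSl}. Your eigenvector test on the $\pm 1$-eigenspaces of $\f|_{\ker\eta}$ (valid since $g$ is positive definite and $f\neq 0$ on $\F_5\setminus\F_0$) is a welcome explicit filling-in of the necessity step that the paper leaves implicit.
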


\begin{proof}
Taking into account \eqref{roEl2}, \eqref{tf}, \eqref{tfF5} and \eqref{tfElRl-const}, we obtain
\begin{equation*}\label{roElF5}
\begin{array}{l}
\left(\n_x\rho\right)(y,z)=(a+\lm)\bigl\{(b+c)\left\{g(\f x, \f y)\eta(z) +g(\f x, \f z)\eta(y)\right\} \\
\phantom{\left(\n_x\rho\right)(y,z)=(a+\lm)\bigl\{(b}
+b\left\{g(x,\f y)\eta(z) +g(x,\f z)\eta(y)\right\}\bigr\}.
\end{array}
\end{equation*}
Using the latter result and following the proof for (ii) and (iii) in \corref{cor:ElSl}, we establish the truthfulness of the statements.
\end{proof}

\section{Examples}\label{examples}

\subsection{Example 1}
In \cite{IvMaMa2}, {a $5$-dimensional Lie group $G$} is considered with a basis of left-invariant vector fields $\{e_0,\dots, e_{4}\}$ {and the}  corresponding Lie algebra is determined {for $p,q\in\R$} as follows
\begin{equation}\label{comEx1}
\begin{array}{ll}
[e_0,e_1] = p e_2 - e_3 + q e_4,\quad &[e_0,e_2] = - p e_1 - q e_3 - e_4,\\[0pt]
[e_0,e_3] = - e_1  + q e_2 + p e_4,\quad &[e_0,e_4] = - q e_1 - e_2 - p e_3.
\end{array}
\end{equation}
In the {same} paper, $G$ is equipped with an invariant apapR structure $(\phi, \xi, \eta, g)$ {as follows:}
\begin{equation}\label{strEx1}
\begin{array}{l}
g(e_0,e_0)=g(e_1,e_1)=g(e_2,e_2)=g(e_{3},e_{3})=g(e_{4},e_{4})=1, \\[0pt]
g(e_i,e_j)=0,\quad i,j\in\{0,1,\dots,4\},\; i\neq j, \\[0pt]
\xi=e_0, \quad \f  e_1=e_{3},\quad  \f e_2=e_{4},\quad \f  e_3=e_{1},\quad \f  e_4=e_{2}.
\end{array}
\end{equation}
It is proved that the constructed manifold $(G, \phi, \xi, \eta, g)$ is a para-Sasaki-like manifold.
Taking into account \eqref{comEx1} and \eqref{strEx1}, we compute the components $R_{ijkl}=R(e_i,e_j,e_k,e_l)$ of the curvature tensor and the components   $\rho_{ij}=\rho(e_i,e_j)$ of the Ricci tensor.
The non-zero of them are determined by the following equalities and the well-known their symmetries and antisymmetries:
\begin{equation}\label{Rex1}
\begin{array}{l}
R_{0110}=R_{0220}=R_{0330}=R_{0440}=-1,\\
R_{1234}=R_{1432}=R_{1331}=R_{2442}=1,\qquad \rho_{00}=-4.
\end{array}
\end{equation}

Therefore, using \eqref{defEl}, we establish that the constructed manifold is $\eta$-Einstein with constants
\begin{equation}\label{abcS}
(a,b,c)=(0,0,-4).
\end{equation}

Now, we check whether $(G, \phi, \xi, \eta, g)$ admits a Ricci-like soliton with potential $\xi$.
By virtue of \eqref{strEx1} and \eqref{Lg=nxi},
we obtain the components $\left(\LL_\xi g\right)_{ij}=\left(\LL_\xi g\right)(e_i,e_j)$ and
the non-zero of them are
\begin{equation}\label{Lex1}
\begin{array}{l}
\left(\LL_\xi g\right)_{13}=\left(\LL_\xi g\right)_{24}=\left(\LL_\xi g\right)_{31}=\left(\LL_\xi g\right)_{42}=2.
\end{array}
\end{equation}
Afterwards, using \eqref{strEx1}, \eqref{Rex1} and \eqref{Lex1}, we get that \eqref{defRl} is satisfied for
\begin{equation}\label{lmnS}
(\lm,\mu,\nu)=(0,-1,5).
\end{equation}
Therefore, the manifold $(G,\f,\allowbreak{}\xi,\allowbreak{}\eta,\allowbreak{}g)$
admits a para-Ricci-like soliton with potential $\xi$.

\begin{remark}
The constructed 5-dimensional para-Sasaki-like manifold $(G,\allowbreak{}\f,\allowbreak{}\xi,\eta,g)$ is $\eta$-Einstein with constants given in \eqref{abcS} and $(G,\f,\allowbreak{}\xi,\eta,g)$ admits a para-Ricci-like soliton with potential $\xi$ and constants determined in \eqref{lmnS}. Moreover, these constants satisfy \eqref{SlElRl-const} and therefore this example supports the assertion (iii) of Theorem A.
\end{remark}

\subsection{Example 2}
In \cite{ManTav2}, {an example of a 3-dimensional apapR manifold belonging to the basic class $\F_5$} is considered. The constructed manifold is denoted by $(L,\f,\allowbreak{}\xi,\eta,g)$, where $L$ is a Lie group with a basis $\{e_0, e_1, e_2\}$ of left invariant vector fields.
The corresponding Lie algebra and the apapR structure are defined by:
\begin{equation}\label{strL}
\begin{array}{c}
[e_0,e_1]=p e_1, \qquad [e_0,e_2]=p e_2, \qquad
[e_1,e_2]=0,\qquad p\in\R,\\
\begin{array}{l}
\f e_0=0,\qquad \f e_1=e_{2},\qquad \f e_{2}= e_1,\qquad \xi=
e_0,\\
\eta(e_0)=1,\qquad \eta(e_1)=\eta(e_{2})=0,
\end{array}\\
\begin{array}{l}
  g(e_0,e_0)=g(e_1,e_1)=g(e_{2},e_{2})=1, \\
  g(e_0,e_1)=g(e_0,e_2)=g(e_1,e_2)=0.
\end{array}
\end{array}
\end{equation}

In the {same} paper, it is obtained that $(L,\f,\allowbreak{}\xi,\eta,g)$ is Einstein with constants
\begin{equation}\label{abcF5}
(a,b,c)=(-2p^2,0,0).
\end{equation}

Moreover, it is easy to check that $\xi$ is a torse-forming vector field with constant
\begin{equation}\label{fF5}
f=-p.
\end{equation}

By virtue of \eqref{strL} and \eqref{Lg=nxi},
we obtain the components $\left(\LL_\xi g\right)_{ij}$ and
the non-zero of them are
\begin{equation}\label{Lex2}
\begin{array}{l}
\left(\LL_\xi g\right)_{11}=\left(\LL_\xi g\right)_{22}=-2p.
\end{array}
\end{equation}

After that, using \eqref{strL}  and \eqref{Lex2}, we get that \eqref{defRl} is satisfied for
\begin{equation}\label{lmnF5}
(\lm,\mu,\nu)=(p+2p^2,0,-p).
\end{equation}
Therefore, the manifold $(L,\f,\allowbreak{}\xi,\allowbreak{}\eta,\allowbreak{}g)$
admits a para-Ricci-like soliton with torse-forming potential $\xi$.

\begin{remark}
The constructed 3-dimensional apapR manifold $(L,\f,\allowbreak{}\xi,\eta,g)$ is Einstein with constants given in \eqref{abcF5} and $(L,\f,\allowbreak{}\xi,\eta,g)$ admits an $\eta$-Ricci soliton with potential $\xi$ and constants determined in \eqref{fF5} and \eqref{lmnF5}. Moreover, these constants satisfy \eqref{tfElRl-const} and therefore this example supports the assertion (iii) of Theorem B.
\end{remark}

\subsection*{Acknowledgment}
The research of H.M. is partially supported by the National Scientific Program ``Young Researchers and Post-Doctorants'' and the project MU21-FMI-008 of the Scientific Research Fund, University of Plovdiv Paisii Hilendarski. The research of M.M. is partially supported by projects MU21-FMI-008 and FP21-FMI-002 of the Scientific Research Fund, University of Plovdiv Paisii Hilendarski.

\end{document}